\documentclass[12pt,leqno]{amsart}
\topmargin = 0.02cm
\textwidth = 17cm
\textheight = 24cm
\baselineskip=16pt
\setlength{\oddsidemargin}{-0.3 pt}
\setlength{\evensidemargin}{-0.3 pt}

\newtheorem{theorem}{Theorem}[section]
\newtheorem{definition}{Definition}[section]

\newcommand{\be}{\begin{equation}}
\newcommand{\ee}{\end{equation}}
\numberwithin{equation}{section}
\newcommand{\bea}{\begin{eqnarray}}
\newcommand{\eea}{\end{eqnarray}}
\newcommand{\beb}{\begin{eqnarray*}}
\newcommand{\eeb}{\end{eqnarray*}}
\usepackage{amssymb,amsfonts,amsthm,setspace,indentfirst}
\usepackage[dvips]{graphics}
\usepackage{epsfig}
\begin{document}
\title{Rough $\mathcal{I}$-statistical convergence in a partial metric space}
\author{Sukila khatun$^{1}$, Khairul Hasan$^2$ and Amar Kumar banerjee$^{3}$}
\address{$^{1}$,$^{2}$,$^{3}$ Department of Mathematics, The University of Burdwan, Golapbag, Burdwan-713104, West Bengal, India.} 
\email{$^{1}$sukila610@gmail.com}
\email{$^{2}$khairul9734@gmail.com}
\email{$^{3}$akbanerjee@math.buruniv.ac.in, akbanerjee1971@gmail.com}
\begin{abstract}
 In this paper we study the notion of rough $\mathcal{I}$-statistical convergence of sequences in a partial metric space as an extension work of both the notions of rough statistical and rough ideal convergence. Here we define rough $\mathcal{I}$-statistical limit set and discuss some relevant properties associated with this set.
\end{abstract}
\subjclass[2020]{40A05, 40G99.}
\keywords{Partial metric spaces, $\mathcal{I}$-statistical convergence,  rough $\mathcal{I}$-statistical convergence, rough $\mathcal{I}$-statistical limit points.}
\maketitle
\section{\bf{Introduction }}
Using the notion of natural density of $\mathbb N$ the conception of ordinary convergence of real sequences were generalized to statistical convergence by H. Fast \cite{HF} and H. Steinhaus \cite{HS} independently. This notion of statistical convergence became an active research area in summability theory after the works of Fridy \cite{FRIDY} and Salat \cite{TS}. Further, using the concept of ideals of $\mathbb N$ the notion of statistical convergence were extended to $\mathcal{I}$-convergence by Kostyrko et al. \cite{PK3} (see also \cite{PK2}). Lot of work on $\mathcal{I}$-convergence or $\mathcal{I}$-divergence can be found in \cite{{PM4}, {PM5}, {AKBAP1}, {AKBAP2}, {AKBMP}}. In 2011, the notion of ideal statistical convergence in short $\mathcal{I}$-statistical convergence was given by P. Das et al. \cite{PD2} as a generalization of the notions of statistical convergence and $\mathcal{I}$-convergence. Again, Yamanc and Gurdal \cite{YG} studied the concept of ideal statistical convergence in 2-normed spaces. 
The idea of rough convergence of sequences in a normed linear space was introduced by H. X. Phu \cite{PHU,PHU1} in 2001. Thereafter more works on rough convergence were carried out in various directions (see also \cite{{RMROUGH}, {DR}, {PMROUGH1},{SUK1}}). Further, using the concept of natural density of $\mathbb N$ the idea of rough convergence was extended to rough statistical convergence by S. Ayter \cite{AYTER1}. More investigations of this idea can be found in \cite{{SUK4}, {PMROUGH2}} and many others. Again, the idea of rough convergence was extended to rough $\mathcal{I}$-convergence by Pal et al.\cite{PAL}. Further investigations on this work were done in \cite{{NH}, {SUK6}}. P. Malik et al. \cite{PMROUGH3} studied the idea of rough $\mathcal{I}$-statistical convergence of sequences in a normed linear space.
In 1994, Matthews \cite{MATW1} (see also \cite{MATW2}) introduced the concept of partial metric spaces as a generalization of metric spaces, using the notion of self-distance $d(x,x)$ which may not be zero where as in a metric space it is always zero. In our present work we discuss the idea of rough $\mathcal{I}$-statistical convergence of sequences in a partial metric space. Also we have found out several properties of rough $\mathcal{I}$-statistical limit set and other relevant properties of this.

\section{\bf{Preliminaries}}

    If $ B \subset \mathbb{N}$, then $B_{n}$ will denote the set $ \{k \in B : k \leq n \}$ and $|B_{n}|$ stands for the cardinality of $B_{n}$. The natural density of $B$ is denoted by $d(B)$ and defined by $d(B)=lim_{n\to\infty} \frac{|B_{n}|}{n}$, if the limit exists.\\ 
    A real sequence $\{ \xi_n \}$ is said to be statistically convergent to $\xi$ if for every $\varepsilon>0$ the set 
    \begin{center}
        $B(\varepsilon)= \{ k \in \mathbb{N}: |\xi_{k} - \xi| \geq \varepsilon \}$
    \end{center}
    has natural density zero. In this case, $\xi$ is called the statistical limit of the sequence $\{ \xi_n \}$ and we write $st-lim \ \xi_{n}= \xi$. We have $d(B^c)=1-d(B)$, where $B^c=\mathbb{N} \setminus B$ is the complement of $B$. If $B_1 \subset B_2$, then clearly, $d(B_1) \leq d(B_2)$.\\
Let $X$ be a non-empty set. We denote $2^X$ as the power set of $X$.
Then a family of sets $\mathcal{I} \subset 2^X$ is said to be an ideal if $(i) \phi \in \mathcal{I}$ 
$(ii) A,B \in \mathcal{I} \Rightarrow A \cup B \in \mathcal{I}$ 
$(iii) A \in \mathcal{I}, B \subset A \Rightarrow B \in \mathcal{I}$. \\ 
$\mathcal{I}$ is called non-trivial ideal if $\mathcal{I} \neq 2^X,\{\phi\}$. A non-trivial ideal $\mathcal{I}$ in $X$ is called admissible if $\{x\} \in \mathcal{I}$ for each $x \in X$. Clearly the family $\mathcal{F(I)}=\{A \subset X: X \setminus A \in \mathcal{I}\}$ is a filter on $X$ which is called the filter associated with $\mathcal{I}$.

\begin{definition} \cite{PK1}
    Let $\mathcal{I}$ be a non-trivial ideal of $\mathbb N$. A sequence $\{x_n\}$ in $\mathbb R$ is said to be $\mathcal{I}$-convergent to $x$ if for every $\varepsilon>0$, the set $A(\varepsilon)=\{n \in \mathbb N: |x_n-x| \geq \varepsilon\} \in \mathcal{I}$. \\
    If $\{x_n\}$ is $\mathcal{I}$-convergent to $x$, then $x$ is called $\mathcal{I}$-limit of $\{x_n\}$and we write $\mathcal{I}-lim \ x_n=x$.
\end{definition}

\begin{definition} \cite{PK1}
    Let $\mathcal{I}$ be an admissible ideal in $\mathbb N$. A sequence $\{x_n\}$ of real numbers is said to be $\mathcal{I^*}$-convergent to $x$ (shortly $\mathcal{I^*}-lim \ x_n=x$) if there is a set $M=\{m_1 < m_2 < ... \}\in \mathcal {F(I)}$ such that $lim_{k\to\infty}x_{m_k}=x$.
\end{definition}

\begin{definition}\cite{BMW}
A partial metric on a non-empty set $X$ is a function $p: X\times X \longrightarrow [0, \infty)$ such that for all $x,y,z \in X$:\\
$(p1)$ $0 \leq p(x,x) \leq p(x,y)$ (nonnegativity and small self-distances),\\
$(p2)$ $x=y \Longleftrightarrow p(x,x)=p(x,y)=p(y,y)$ (indistancy both implies equality),\\
$(p3)$  $p(x,y)= p(y,x)$ (symmetry),\\
$(p4)$  $p(x,y) \leq p(x,z) + p(z,y) - p(z,z)$ (triangularity).\\
If $p$ is a partial metric on a nonempty set $X$, then the pair $(X,p)$ is said to be a partial metric space.

Properties and examples of partial metric spaces were widely discussed in \cite{BMW}.
\end{definition}

\begin{definition} \cite{BMW}
In a partial metric space $(X,p)$, for $r>0$ and $x \in X$ we define the open and closed ball of radius $r$ and center $x$ respectively as follows  :
\begin{center}
    $B^{p}_{r}(x)=\{ y \in X : p(x,y)<p(x,x)+r  \}$ \\
$\overline{B^{p}_{r}}(x)=\{ y \in X : p(x,y) \leq p(x,x)+r \}.$ 
\end{center}
\end{definition}

\begin{definition} \cite{BMW}
Let $(X,p)$ be a partial metric space. A subset $U$ of $X$ is said to be a bounded in $X$ if there exists a positive real number $M$ such that $sup$ $\{ p(x,y): x,y \in U\}< M$.
\end{definition}

\begin{definition} \cite{BMW}
Let $(X,p)$ be a partial metric space and $\{x_{n}\}$ be a sequence in $X$. Then $\{x_{n}\}$ is said to converge to $x \in X$ if for each $\epsilon > 0$ there exists $k \in \mathbb{N}$ such that 
$|p(x_{n},x)-p(x,x)|< \epsilon$ for all $n \geq k$, i.e., if $lim_{n\to\infty}p(x_{n},x)=p(x,x)$. 
\end{definition}

\begin{definition} \cite{SUK2}
Let $(X,p)$ be a partial metric space. A sequence $\{ x_{n} \}$ in $X$ is said to be rough convergent (or $r$-convergent) to $x$ of roughness degree $r$ for some non-negative real number $r$ if, for every  $\epsilon > 0$, there exists a natural number $k$ such that $| p(x_{n}, x)-p(x,x) |<r + \epsilon $ holds for all $n \geq k$.
\end{definition}

\begin{definition}\cite{FN}
    Let $(X,\rho)$ be a partial metric space. Then the sequence $\{x_n\}$ is said to be statistically convergent to $x$ if, for every $\varepsilon>0$,
    \begin{center}
        $d(\{n \in \mathbb N: |\rho(x_{n},x)-\rho(x,x)| \geq \varepsilon\})=0$.
    \end{center}
\end{definition}

\begin{definition} \cite{SUK4}
    A sequence $\{x_n\}$ in a partial metric space $(X,\rho)$ is said to be rough statistically convergent (or in short $r$-statistically convergent or $r-st$ convergent) to $x$ of roughness degree $r$ for some non-negative real number $r$ if, for every $\varepsilon>0$, 
    \begin{center}
        $d(\{n \in \mathbb N : |p(x_n,x)-p(x,x) | \geq r+\varepsilon\})=0$.
    \end{center}
\end{definition}

\begin{definition} \cite{DUN}
Let  $\mathcal{I}$ be a non-trivial admissible ideal of $\mathbb N$. A sequence $\{x_n\}$ in a partial metric space $(X,p)$ is said to be ideal convergent ($\mathcal{I}$-convergent) to $x \in X$ if for every $\varepsilon>0$, the set $A(\varepsilon)=\{n \in \mathbb N: |p(x_{n},x)-p(x,x)| \geq \varepsilon\} \in \mathcal{I}$ i.e., if $\mathcal{I}-lim_{n\to\infty}p(x_{n},x)=p(x,x)$.
\end{definition}

\begin{definition} \cite{SUK6}
Let $(X,p)$ be a partial metric space and $\{x_n\}$ be a sequence in $X$. Then the sequence $\{x_n\}$ is said to be rough ideal convergent (or in short rough $\mathcal{I}$-convergent or $r-\mathcal{I}$ convergent) to $x$ if, for any $\varepsilon>0$, the set 
\begin{center}
    $A(\varepsilon)=\{n \in \mathbb N : |p(x_n,x)-p(x,x) | \geq r+\varepsilon\} \in \mathcal{I}$.
\end{center}
\end{definition}

\section{\bf{Rough ideal statistical convergence in partial metric spaces}}

\begin{definition}
A sequence $\{\xi_n\}$ in a partial metric space $(X,p)$ is said to be rough ideal statistically convergent (or in short rough $\mathcal{I}$-statistically convergent or $r-\mathcal{I}$-statistically convergent) to $\xi$ if for any $\varepsilon>0$ and $\delta>0$, the set 
\begin{center}
    $A=\{n \in \mathbb N : \frac{1}{n}|\{k \leq n: |p(\xi_k,\xi)-p(\xi,\xi)|  \geq r+\varepsilon \} | \geq \delta\} \in \mathcal{I}$.
\end{center}
\end{definition}

It will be denoted by  $\xi_{n} \stackrel{r-\mathcal{I}-st}{\longrightarrow} \xi$ in $(X,p)$. Here $r$ is said to be the degree of roughness. If $r=0$, then the rough $\mathcal{I}$-statistical convergence becomes the $\mathcal{I}$-statistical convergence in any partial metric space $(X,p)$. If a sequence $\{\xi_n\}$ is rough $\mathcal{I}$-statistically convergent to $\xi$, then $\xi$ is said to be a rough $\mathcal{I}$-statistical limit point. The set of all rough $\mathcal{I}$-statistical limit points of a sequence $\{\xi_n\}$ is said to be the rough $\mathcal{I}$-statistical limit set. This can be denoted by $\mathcal{I}-st-LIM^{r}\xi_{n}$. So, $\mathcal{I}-st-LIM^{r}\xi_{n}= \left\{\xi \in X : \xi_{n} \stackrel{r-\mathcal{I}-st}{\longrightarrow} \xi \right\}$. \\  

\begin{theorem}
    For any sequence $\{\xi_n\}$, $r-st-lim \ p(\xi_n,\xi)=p(\xi,\xi)$ implies $r-\mathcal{I}-st-lim \ p(\xi_n,\xi)=p(\xi,\xi)$.
\end{theorem}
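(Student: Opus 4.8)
The plan is to unwind both definitions and reduce the ideal condition to the membership of a finite set in $\mathcal{I}$. Fix $\varepsilon > 0$ and set $B = \{k \in \mathbb N : |p(\xi_k,\xi)-p(\xi,\xi)| \geq r+\varepsilon\}$. In the notation of the Preliminaries, $B_n = \{k \in B : k \leq n\}$, so that the expression occurring in Definition 3.1 is exactly $\frac{1}{n}|\{k \leq n : |p(\xi_k,\xi)-p(\xi,\xi)| \geq r+\varepsilon\}| = \frac{|B_n|}{n}$. The hypothesis $r-st-lim \ p(\xi_n,\xi)=p(\xi,\xi)$ means, by Definition 2.9, that $d(B)=0$, i.e., $lim_{n\to\infty}\frac{|B_n|}{n}=0$.

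The key step is to pass from this density statement to the set appearing in Definition 3.1. Fix $\delta>0$. Since $\frac{|B_n|}{n} \to 0$ as $n \to \infty$, there exists $n_0 \in \mathbb N$ such that $\frac{|B_n|}{n} < \delta$ for all $n \geq n_0$. Hence the set $A = \{n \in \mathbb N : \frac{|B_n|}{n} \geq \delta\}$ is contained in $\{1,2,\ldots,n_0-1\}$ and is therefore finite.

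Finally I would invoke admissibility of $\mathcal{I}$. Since $\mathcal{I}$ is a non-trivial admissible ideal of $\mathbb N$, each singleton of $\mathbb N$ lies in $\mathcal{I}$, so by the finite-union property $(ii)$ every finite subset of $\mathbb N$ lies in $\mathcal{I}$, and by the heredity property $(iii)$ so does every subset of a finite set. Therefore $A \in \mathcal{I}$ for every choice of $\varepsilon>0$ and $\delta>0$, which is precisely the defining requirement for $r-\mathcal{I}-st-lim \ p(\xi_n,\xi)=p(\xi,\xi)$. I expect no genuine obstacle here: the argument rests on the elementary fact that natural density zero forces the partial densities $\frac{|B_n|}{n}$ to converge to zero, which reduces the ideal membership to the triviality that finite sets belong to any admissible ideal. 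The single point that demands care is the admissibility of $\mathcal{I}$; without it a finite set need not belong to $\mathcal{I}$ and the implication could fail.
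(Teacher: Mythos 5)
Your proof is correct and follows essentially the same route as the paper's: both unwind the density-zero hypothesis to conclude that for each $\varepsilon>0$ and $\delta>0$ the set $\{n \in \mathbb N : \frac{1}{n}|\{k \leq n : |p(\xi_k,\xi)-p(\xi,\xi)| \geq r+\varepsilon\}| \geq \delta\}$ is finite, and then invoke admissibility of $\mathcal{I}$ to place it in the ideal. Your added remark that admissibility is genuinely needed (and your spelling out of why finite sets lie in an admissible ideal) is a correct elaboration of the same argument, not a different approach.
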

\begin{proof} 
 Let $r-st-lim \ p(\xi_n,\xi)=p(\xi,\xi)$.\\
 Then for $\varepsilon>0$, the set $A=\{ k \in \mathbb N: |p(\xi_{k},\xi)-p(\xi,\xi)| \geq r+\varepsilon \}$ has natural density zero.\\
 i.e., $lim_{n\to\infty} \frac{1}{n}|\{k \leq n: |p(\xi_{k},\xi)-p(\xi,\xi)| \geq r+\varepsilon\}|=0$.\\
 So, for every $\varepsilon>0$ and $\delta>0$,\\
 $\{n \in \mathbb N : \frac{1}{n}|\{k \leq n: |p(\xi_k,\xi)-p(\xi,\xi)|  \geq r+\varepsilon \} | \geq \delta\}$ is a finite set and therefore belongs to $\mathcal{I}$, as $\mathcal{I}$ is an admissible ideal.
 Hence $r-\mathcal{I}-st-lim \ p(\xi_n,\xi)=p(\xi,\xi)$. 
\end{proof}

\begin{definition} \cite{SUK2}
    The diameter of a set $B$ in a partial metric space $(X,p)$ is defined by 
    \begin{center}
    $diam(B)$ = $sup$ $\{ p(x,y) : x, y\in B\}$.
\end{center}
\end{definition}

\begin{theorem}
Let $(X,p)$ be a partial metric space and $a$ be a fixed positive real number such that $p(\xi,\xi)=a$ for all $\xi$ in $X$. Then for a sequence $\{\xi_{n}\}$, we have $diam(\mathcal{I}-st-LIM^r\xi_n) \leq (2r+2a)$. 
\end{theorem}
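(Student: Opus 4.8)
The plan is to estimate the partial distance between two arbitrary points of the rough $\mathcal{I}$-statistical limit set and then pass to the supremum. So let $y,z \in \mathcal{I}-st-LIM^{r}\xi_n$ be arbitrary; since $diam(\mathcal{I}-st-LIM^{r}\xi_n)$ is the supremum of $p(y,z)$ over all such pairs, it suffices to prove $p(y,z) \le 2r+2a$. I would in fact establish that $p(y,z) < 2r+2a+2\varepsilon$ for every $\varepsilon>0$ and then let $\varepsilon \to 0$.

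Fix $\varepsilon>0$ and put $G=\{k\in\mathbb{N} : |p(\xi_k,y)-a|\ge r+\varepsilon\}$ and $H=\{k\in\mathbb{N} : |p(\xi_k,z)-a|\ge r+\varepsilon\}$, using that $p(y,y)=p(z,z)=a$. Applying the definition of rough $\mathcal{I}$-statistical convergence to $y$ with this $\varepsilon$ and with $\delta=\frac12$ gives
\[
A_y=\Big\{n\in\mathbb{N} : \frac{|G_n|}{n}\ge \frac12\Big\}\in\mathcal{I},
\]
where $G_n$ is as in the Preliminaries, and the analogous set $A_z$ built from $H$ likewise lies in $\mathcal{I}$. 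Since $\mathcal{I}$ is an ideal, $A_y\cup A_z\in\mathcal{I}$, and since $\mathcal{I}$ is non-trivial we have $\mathbb{N}\notin\mathcal{I}$, so $A_y\cup A_z\ne\mathbb{N}$ and there is some $n\notin A_y\cup A_z$.

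The heart of the argument, and the step I expect to be the main obstacle, is to extract from this $n$ a single index $k$ that is good for $y$ and $z$ simultaneously. For the chosen $n$ one has $|G_n|<\frac{n}{2}$ and $|H_n|<\frac{n}{2}$, hence $|G_n|+|H_n|<n$, so $G\cup H$ cannot contain the whole block $\{1,2,\dots,n\}$. Thus there exists $k\le n$ with $k\notin G$ and $k\notin H$, i.e. $|p(\xi_k,y)-a|<r+\varepsilon$ and $|p(\xi_k,z)-a|<r+\varepsilon$, giving $p(\xi_k,y)<a+r+\varepsilon$ and $p(\xi_k,z)<a+r+\varepsilon$. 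The role of the threshold $\delta=\frac12$ is precisely to make the two exceptional sets too sparse to cover $\{1,\dots,n\}$ between them, and this is where non-triviality of $\mathcal{I}$ is essential.

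To finish, I would apply the triangularity axiom $(p4)$ with $\xi_k$ as the intermediate point: $p(y,z)\le p(y,\xi_k)+p(\xi_k,z)-p(\xi_k,\xi_k)\le p(y,\xi_k)+p(\xi_k,z)<2(a+r+\varepsilon)$, that is, $p(y,z)<2r+2a+2\varepsilon$. Letting $\varepsilon\to 0$ yields $p(y,z)\le 2r+2a$, and taking the supremum over $y,z\in\mathcal{I}-st-LIM^{r}\xi_n$ gives $diam(\mathcal{I}-st-LIM^{r}\xi_n)\le 2r+2a$. (I note in passing that retaining the subtracted term $p(\xi_k,\xi_k)=a$ in $(p4)$ would in fact give the sharper bound $a+2r$; discarding this nonnegative term is what produces the stated estimate.)
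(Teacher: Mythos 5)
Your proof is correct and follows essentially the same route as the paper's: both arguments hinge on using the ideal property to find a single index $k$ lying outside the exceptional sets for both limit points simultaneously, and then applying the triangularity axiom $(p4)$ through $\xi_k$. The only differences are presentational — you argue directly with $\delta=\tfrac12$ and let $\varepsilon\to 0$, whereas the paper argues by contradiction with a specially chosen $\varepsilon$ — and your closing remark that retaining $p(\xi_k,\xi_k)=a$ yields the sharper bound $2r+a$ is accurate (the paper's own computation implicitly gives this too).
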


\begin{proof}
Let us suppose that $diam(\mathcal{I}-st-LIM^{r} \xi_{n}) > 2r+2a$.
Then there exist elements $\alpha,\beta \in \mathcal{I}-st-LIM^{r} \xi_{n}$ such that $p(\alpha,\beta)>2r+2a$.
Let $\varepsilon>0$ and take $\varepsilon =\frac{p(\alpha,\beta)}{2}-r-a$. 
Let $K_1=\{ k \in \mathbb N: |p(\xi_{k},\alpha)-p(\alpha,\alpha)| \geq r+\varepsilon \}$ and $K_2=\{ k \in \mathbb N: |p(\xi_{k},\beta)-p(\beta,\beta)| \geq r+\varepsilon \}$.
Now, \begin{equation*}
    \frac{1}{n}|\{k \leq n: k \in K_1 \cup K_2\}| \leq \frac{1}{n}|\{k \leq n: k \in K_1\}|+ \frac{1}{n}|\{k \leq n: k \in K_2\}|
\end{equation*}
Again, from the property of $\mathcal{I}$-convergence \\
$\mathcal{I}-lim_{n\to\infty} \frac{1}{n}|\{k \leq n: k \in K_1 \cup K_2\}| \leq \mathcal{I}-lim_{n\to\infty} \frac{1}{n}|\{k \leq n: k \in K_1\}|+ \mathcal{I}-lim_{n\to\infty} \frac{1}{n}|\{k \leq n: k \in K_2\}|=0.$
Thus $\{ n \in \mathbb N: \frac{1}{n}|\{k \leq n: k \in K_1 \cup K_2\}| \geq \delta \} \in \mathcal{I}$ for $\delta>0$.
Let $K=\{ n \in \mathbb N: \frac{1}{n}|\{k \leq n: k \in K_1 \cup K_2\}| \geq \frac{1}{n}\}$.
Clearly $K \in \mathcal{I}$ and we choose $n_0 \in \mathbb N \setminus K$.
Then $\frac{1}{n_0}|\{k \leq n_0: k \in K_1 \cup K_2\}| < \frac{1}{3}$.
So, $\frac{1}{n_0}|\{k \leq n_0: k \notin K_1 \cup K_2\}| \geq 1- \frac{1}{3}= \frac{2}{3}$ i.e., $\{k: k \notin K_1\cup K_2\}$ is a non-empty set.
Take $k_0 \in \mathbb N$ such that $k_0 \notin K_1 \cup K_2$.
So, $k_0 \in K_1^c \cap K_2^c$.
Hence $|p(\xi_{k_0},\alpha)-p(\alpha,\alpha)| \leq r+\varepsilon$ and 
$|p(\xi_{k_0},\beta)-p(\beta,\beta)| \leq r+\varepsilon$.
Now, 
 \begin{equation*}
       \begin{split}
p(\alpha,\beta) &\leq p(\alpha,\xi_{k_0})+p(\xi_{k_0},\beta)-p(\xi_{k_0},\xi_{k_0})\\
       &=\{p(\xi_{k_0},\alpha)-p(\alpha,\alpha)\}+\{\xi_{k_0},\beta)-p(\beta,\beta)\}-p(\xi_{k_0},\xi_{k_0})+p(\alpha,\alpha)+p(\beta,\beta)\\
       &< 2(r+\varepsilon)-a+a+a\\
       &=2r+2\varepsilon+a\\
       &<2r+ p(\alpha,\beta)-2r-2a+a\\
       &=p(\alpha,\beta)-a , \ \text{which is a contradiction}.    
       \end{split}
   \end{equation*}    
Therefore, we have $diam(\mathcal{I}-st-LIM^{r} x_{n}) \leq 2r+2a$. 
\end{proof}

\begin{theorem}
 If a sequence $\{\xi_{n}\}$ is  $\mathcal{I}$-statistically convergent to $\xi$ in a partial metric space $(X,p)$, then $\{ \eta \in \overline{B^{p}_{r}}(\xi):p(\xi,\xi)=p(\eta,\eta)\} \subset \mathcal{I}-st-LIM^{r}\xi_{n}$.
\end{theorem}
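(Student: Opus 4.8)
The plan is to verify directly that each admissible $\eta$ satisfies the defining condition for membership in $\mathcal{I}-st-LIM^r\xi_n$, by comparing term by term the quantity $|p(\xi_k,\eta)-p(\eta,\eta)|$ with $|p(\xi_k,\xi)-p(\xi,\xi)|$ and then transferring a resulting set inclusion through the ideal. Throughout I write $a=p(\xi,\xi)$ and fix $\eta \in \overline{B^p_r}(\xi)$ with $p(\eta,\eta)=a$, so that $p(\xi,\eta) \leq a+r$. The first thing I would record are two consequences of axiom $(p1)$: since $p(\xi_k,\eta) \geq p(\eta,\eta)=a$ and $p(\xi_k,\xi) \geq p(\xi,\xi)=a$, both of the differences $p(\xi_k,\eta)-a$ and $p(\xi_k,\xi)-a$ are nonnegative, so the absolute values occurring in the two density sets may simply be dropped.

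Next I would invoke the triangularity axiom $(p4)$ with intermediate point $\xi$, which gives
\begin{equation*}
p(\xi_k,\eta) \leq p(\xi_k,\xi)+p(\xi,\eta)-p(\xi,\xi) = (p(\xi_k,\xi)-a)+p(\xi,\eta).
\end{equation*}
Subtracting $a$ and using $p(\xi,\eta) \leq a+r$ produces the single crucial estimate $p(\xi_k,\eta)-a \leq (p(\xi_k,\xi)-a)+r$. Consequently, whenever $p(\xi_k,\xi)-a < \varepsilon$ we obtain $p(\xi_k,\eta)-a < r+\varepsilon$; taking contrapositives, together with the nonnegativity recorded in the first step, establishes the index inclusion
\begin{equation*}
\{k \leq n : |p(\xi_k,\eta)-p(\eta,\eta)| \geq r+\varepsilon\} \subseteq \{k \leq n : |p(\xi_k,\xi)-p(\xi,\xi)| \geq \varepsilon\}
\end{equation*}
for every $n$, and hence the inequality $\frac{1}{n}|\{k \leq n : |p(\xi_k,\eta)-p(\eta,\eta)| \geq r+\varepsilon\}| \leq \frac{1}{n}|\{k \leq n : |p(\xi_k,\xi)-p(\xi,\xi)| \geq \varepsilon\}|$ between the two density ratios.

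Finally, for arbitrary $\varepsilon>0$ and $\delta>0$ I would lift this to the outer sets that are measured by $\mathcal{I}$: the inequality between density ratios forces
\begin{equation*}
\{n \in \mathbb N : \frac{1}{n}|\{k \leq n : |p(\xi_k,\eta)-p(\eta,\eta)| \geq r+\varepsilon\}| \geq \delta\} \subseteq \{n \in \mathbb N : \frac{1}{n}|\{k \leq n : |p(\xi_k,\xi)-p(\xi,\xi)| \geq \varepsilon\}| \geq \delta\}.
\end{equation*}
The right-hand set belongs to $\mathcal{I}$ because $\{\xi_n\}$ is $\mathcal{I}$-statistically convergent to $\xi$ (the case $r=0$ of the definition of rough $\mathcal{I}$-statistical convergence), and the hereditary property $(iii)$ of an ideal then places the left-hand set in $\mathcal{I}$ as well. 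Since $\varepsilon$ and $\delta$ were arbitrary, this is exactly the statement $\xi_n \stackrel{r-\mathcal{I}-st}{\longrightarrow} \eta$, so $\eta \in \mathcal{I}-st-LIM^r\xi_n$, as required.

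I expect the main obstacle to be careful bookkeeping in the first two steps rather than any deep difficulty. Because self-distances need not vanish in a partial metric space, one must use $(p1)$ to justify discarding the absolute values, and the hypothesis $p(\eta,\eta)=p(\xi,\xi)$ must be used in an essential way: it is exactly this equality that makes the extra $+r$ coming from $p(\xi,\eta) \leq a+r$ align with the roughness degree appearing in the target set, so that no term is lost and the final inclusion is clean. Once the $-p(z,z)$ correction term in $(p4)$ is tracked correctly, the remaining transfer through the ideal is routine.
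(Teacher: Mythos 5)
Your proof is correct and follows essentially the same route as the paper's: the triangularity axiom with intermediate point $\xi$, the bound $p(\xi,\eta)\leq p(\xi,\xi)+r$, and the hypothesis $p(\xi,\xi)=p(\eta,\eta)$ combine to give $p(\xi_k,\eta)-p(\eta,\eta)\leq (p(\xi_k,\xi)-p(\xi,\xi))+r$, after which the exceptional sets are compared and the hereditary property of $\mathcal{I}$ finishes the argument. The only cosmetic difference is that you pass directly to the inclusion of the exceptional ("bad") index sets by contraposition, whereas the paper argues through their complements and the $1-\delta$ bound; your explicit appeal to $(p1)$ to discard the absolute values is in fact a point the paper leaves implicit.
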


\begin{proof}
 Let $\varepsilon >0$ and $\delta>0$ be given and a sequence $\{\xi_{n}\}$ be $\mathcal{I}$-statistically convergent to $\xi$ i.e.,$\mathcal{I}-st-lim\ {\xi_n}=\xi$.
 Then the set $A=\{n \in \mathbb N: \frac{1}{n} |\{ k \leq n: |p(\xi_{k},\xi)-p(\xi,\xi)| \geq \varepsilon \}|\geq \delta\} \in \mathcal{I}$.
 For $n \notin A$, we have $ \frac{1}{n} |\{ k \leq n: |p(\xi_{k},\xi)-p(\xi,\xi)| \geq \varepsilon \}|< \delta$, i.e.,  $\frac{1}{n} |\{ k \leq n: |p(\xi_{k},\xi)-p(\xi,\xi)| < \varepsilon \}|> 1-\delta$ .....(1) \\
 Now $\overline{B^{p}_{r}}(x)=\{\eta \in X: p(\xi,\eta) \leq p(\xi,\xi)+r\}$.\\ So if $\eta \in\{\eta \in \overline{B^{p}_{r}}(\xi):p(\xi,\xi)=p(\eta,\eta)\}$, then 
 \begin{equation*}
    \begin{split}
        p(\xi_{k},\eta) &\leq p(\xi_{k},\xi)+p(\xi,\eta)-p(\xi,\xi)\\
                   & = \{p(\xi_{k},\xi)-p(\xi,\xi)\}+p(\xi,\eta)\\
                   & \leq \{p(\xi_{k},\xi)-p(\xi,\xi)\} + \{p(\xi,\xi)+r\}
    \end{split}
\end{equation*}
 Therefore, 
\begin{equation*}
    \begin{split}
p(\xi_{k},\eta)-p(\eta,\eta) & \leq \{p(\xi_{k},\xi)-p(\xi,\xi)\} + \{p(\xi,\xi)+r-p(\eta,\eta)\}\\
                  & =\{p(\xi_{k},\xi)-p(\xi,\xi)\} + r, \ \ \text{since} \ p(\xi,\xi)=p(\eta,\eta) \\
                  &=|p(\xi_k,\xi)-p(\xi,\xi)|+r.....(2)
    \end{split}
\end{equation*}
Let $B_n=\{ k \leq n: |p(\xi_{k},\xi)-p(\xi,\xi)| < \varepsilon \}$.
Then for $k \in B_n$ we have, from (2)\\ $|p(\xi_{k},\eta)-p(\eta,\eta)|=|p(\xi_k,\xi)-p(\xi,\xi)|< r+\varepsilon$.\\
Hence $B_n \subset \{ k \leq n: |p(\xi_{k},\eta)-p(\eta,\eta)| < r+\varepsilon \}$.\\
So, $\frac{|B_n|}{n} \leq \frac{1}{n}|\{ k \leq n: |p(\xi_{k},\eta)-p(\eta,\eta)| < r+\varepsilon \}|$ \\
From (1) we have $\frac{|B(n)|}{n}>1-\delta$,\\
so, $\frac{1}{n} |\{ k \leq n: |p(\xi_{k},\eta)-p(\eta,\eta)| < r+\varepsilon\}|\geq\frac{|B(n)|}{n} > 1-\delta$.\\
So, for all $n \notin A$, \\
$\frac{1}{n} |\{ k \leq n: |p(\xi_{k},\eta)-p(\eta,\eta)| \geq r+\varepsilon\}| < 1-(1-\delta)=\delta$.\\
Thus we have 
$\{n \in \mathbb N: \frac{1}{n} |\{ k \leq n: |p(\xi_{k},\eta)-p(\eta,\eta)| \geq r+\varepsilon\}| \geq \delta\} \subset A$.
Since $A \in \mathcal{I}$, so 
$\{n \in \mathbb N: \frac{1}{n} |\{ k \leq n: |p(\xi_{k},\eta)-p(\eta,\eta)| \geq r+\varepsilon\}| \geq \delta\} \in \mathcal{I}$.
Therefore, $\eta \in \mathcal{I}-st-LIM^{r}\xi_{n}$.
Hence the theorem.
\end{proof}

\begin{theorem}
   Let $\{\xi_{n}\}$ be a sequence in a partial metric space $(X,p)$. Then the rough $\mathcal{I}$-statistical limit set of the sequence $\{\xi_{n}\}$ i.e., the set $\mathcal{I}-st-LIM^{r}\xi_{n}$ is a closed set for any degree of roughness $r \geq 0$.
\end{theorem}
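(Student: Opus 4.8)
The plan is to use the sequential characterization of closedness. Since the topology induced by a partial metric is first countable (the closed balls $\overline{B^{p}_{1/m}}(x)$, $m \in \mathbb N$, form a countable neighbourhood base at $x$), it suffices to take an arbitrary sequence $\{y_j\}$ lying in $L := \mathcal{I}-st-LIM^{r}\xi_{n}$ with $y_j \to y_0$ in $(X,p)$ and to show $y_0 \in L$; if $L$ is empty there is nothing to prove. By Definition 2.6, $y_j \to y_0$ means $\lim_{j\to\infty} p(y_j,y_0) = p(y_0,y_0)$.

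The central observation is that, by $(p1)$, for every $k$ and every $y \in X$ the quantity $p(\xi_k,y) - p(y,y)$ is already nonnegative, so the absolute value appearing in the definition of $L$ is redundant and only a single use of the triangle inequality is needed. Applying $(p4)$ with intermediate point $y_j$ gives
$$p(\xi_k,y_0) - p(y_0,y_0) \le \bigl(p(\xi_k,y_j) - p(y_j,y_j)\bigr) + \bigl(p(y_j,y_0) - p(y_0,y_0)\bigr).$$
Writing $\theta_j := p(y_j,y_0) - p(y_0,y_0)$, which is nonnegative and tends to $0$ as $j \to \infty$, I fix $\varepsilon>0$ and $\delta>0$, choose $j$ so large that $\theta_j < \varepsilon/2$, and set $\varepsilon' = \varepsilon/2$.

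The displayed inequality then shows that $p(\xi_k,y_j) - p(y_j,y_j) < r+\varepsilon'$ forces $p(\xi_k,y_0) - p(y_0,y_0) < r+\varepsilon$. Passing to complements within $\{k \le n\}$ and comparing cardinalities yields, for every $n$,
$$\tfrac{1}{n}\bigl|\{k \le n : |p(\xi_k,y_0)-p(y_0,y_0)| \ge r+\varepsilon\}\bigr| \le \tfrac{1}{n}\bigl|\{k \le n : |p(\xi_k,y_j)-p(y_j,y_j)| \ge r+\varepsilon'\}\bigr|,$$
whence the level set $\{n : \tfrac{1}{n}|\{k\le n : |p(\xi_k,y_0)-p(y_0,y_0)|\ge r+\varepsilon\}| \ge \delta\}$ is contained in the corresponding level set for $y_j$ with $\varepsilon'$ in place of $\varepsilon$. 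The latter belongs to $\mathcal{I}$ because $y_j \in L$, so by the hereditary property $(iii)$ of an ideal the former also belongs to $\mathcal{I}$. Since $\varepsilon,\delta>0$ were arbitrary, $y_0 \in L$, and therefore $L$ is closed.

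The step I expect to need the most care is the reduction to a single direction of the triangle inequality. Definition 2.6 only forces $p(y_j,y_0)\to p(y_0,y_0)$ and controls neither $p(y_j,y_j)$ nor $p(y_0,y_0)$ relative to it, so a symmetric estimate bounding $p(\xi_k,y_j)-p(y_j,y_j)$ in terms of $p(\xi_k,y_0)-p(y_0,y_0)$ would require $p(y_j,y_j)\to p(y_0,y_0)$, which is not guaranteed. The nonnegativity furnished by $(p1)$ is precisely what allows the whole argument to proceed using only the one inequality above.
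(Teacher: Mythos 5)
Your proposal is correct and follows essentially the same route as the paper's proof: pick a term $y_j$ of the approximating sequence with $p(y_j,y_0)-p(y_0,y_0)<\varepsilon/2$, apply $(p4)$ through $y_j$ to get $p(\xi_k,y_0)-p(y_0,y_0)\le (p(\xi_k,y_j)-p(y_j,y_j))+\varepsilon/2$, and conclude by containment of the level sets and the hereditary property of $\mathcal{I}$. Your explicit use of $(p1)$ to discard the absolute values, and the remark on first countability justifying the sequential criterion for closedness, are points the paper leaves implicit, but the argument is the same.
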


\begin{proof}
If $\mathcal{I}-st-LIM^{r}\xi_{n}=\phi$, then there is nothing to prove.
So let $\mathcal{I}-st-LIM^{r}\xi_{n} \neq \phi$.
Now, we consider a sequence $\{\eta_{n}\}$ in $\mathcal{I}-st-LIM^{r}\xi_{n}$ such that $lim_{n\to\infty}\eta_{n}=\eta$.
Choose $\varepsilon>0$ and $\delta>0$.
Then there exists $N_\frac{\varepsilon}{2} \in \mathbb N$ such that $|p(\eta_k,\eta)-p(\eta,\eta)| < \frac{\varepsilon}{2}$ for all $k \geq N_\frac{\varepsilon}{2}$.
Let $k_0 \geq N_\frac{\varepsilon}{2}$.
Then $|p(\eta_{k_0},\eta)-p(\eta,\eta)|<\frac{\varepsilon}{2}$. Since $\eta_{k_0} \in \mathcal{I}-st-LIM^{r}\xi_{n}$, so 
$A=\{n \in \mathbb N: \frac{1}{n} |\{ k \leq n: |p(\xi_{k},\eta_{k_0})-p(\eta_{k_0},\eta_{k_0})| \geq r+\frac{\varepsilon}{2} \}|\geq \delta\} \in \mathcal{I}$.
So, $M=\mathbb N \setminus A$ is non-empty.\\
If $n \in M$, then
$\frac{1}{n} |\{ k \leq n: |p(\xi_{k},\eta_{k_0})-p(\eta_{k_0},\eta_{k_0})| \geq r+\frac{\varepsilon}{2} \}| < \delta$ \\
i.e., $\frac{1}{n} |\{ k \leq n: |p(\xi_{k},\eta_{k_0})-p(\eta_{k_0},\eta_{k_0})| < r+\frac{\varepsilon}{2} \}|> 1-\delta$.\\
Put $B_n=\{ k \leq n: |p(\xi_{k},\eta_{k_0})-p(\eta_{k_0},\eta_{k_0})| < r+\frac{\varepsilon}{2} \}$.
Let $k \in B_n$. Then 
\begin{equation*}
    \begin{split}
p(\xi_{k},\eta)-p(\eta,\eta) & \leq p(\xi_{k},\eta_{k_0})+p(\eta_{k_0},\eta)-p(\eta_{k_0},\eta_{k_0})-p(\eta,\eta) \\
         & \leq | p(\xi_{k},\eta_{k_0})-p(\eta_{k_0},\eta_{k_0})+p(\eta_{k_0},\eta)-p(\eta,\eta) | \\
         & \leq |p(\xi_{k},\eta_{k_0})-p(\eta_{k_0},\eta_{k_0})| + |p(\eta_{k_0},\eta)-p(\eta,\eta)| \\
         & < (r+\frac{\varepsilon}{2}) + \frac{\varepsilon}{2} \\
         & = r+\varepsilon
    \end{split}
\end{equation*}
Hence $B_n \subset \{ k \leq n: |p(\xi_{k},\eta)-p(\eta,\eta)| < r+\varepsilon\}$. \\
Thus if $n\in M=\mathbb N \setminus A$ , then
$1-\delta < \frac{|B_n|}{n} \leq \frac{1}{n}|\{ k \leq n: |p(\xi_{k},\eta)-p(\eta,\eta)| < r+\varepsilon \}|$ \\
i.e., $\frac{1}{n} |\{ k \leq n: |p(\xi_{k},\eta)-p(\eta,\eta)| \geq r+\varepsilon\}| < 1-(1-\delta)=\delta$.\\
So, we have 
$\{n \in \mathbb N: \frac{1}{n} |\{ k \leq n: |p(\xi_{k},\eta)-p(\eta,\eta)| \geq r+\varepsilon\}| \geq \delta\} \subset A \in \mathcal{I}$.\\
This shows that 
$\eta \in \mathcal{I}-st-LIM^{r}\xi_{n}$.
Hence $\mathcal{I}-st-LIM^{r}\xi_{n}$ is a closed set.
\end{proof} 

\begin{definition} (cf \cite{AKB})
   A sequence $\{\xi_{n}\}$ in a partial metric space $(X,p)$ is said to be $\mathcal{I}$-statistically bounded if for any fixed $u \in X$ there exists a positive real number $M$ such that for any $\delta>0$ the set 
   \begin{center}
       $A=\{ n \in \mathbb N : \frac{1}{n}|\{ k \leq n:p(\xi_n,u) \geq M\}| \geq \delta\} \in \mathcal{I}$.
   \end{center} 
\end{definition}

\begin{theorem}
Let $(X,p)$ be a partial metric space and $a$ be a fixed positive real number such that $p(\xi,\xi)=a$, $ \forall \xi \in X$. Then a sequence $\{\xi_{n}\}$ is $\mathcal{I}$-statistically bounded if and only if there exists a non-negative real number $r>0$ such that $\mathcal{I}-st-LIM^{r}\xi_{n} \neq \phi$.     
\end{theorem}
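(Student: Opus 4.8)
The plan is to prove both implications by exploiting the constant self-distance hypothesis $p(\xi,\xi)=a$ together with the small-self-distance axiom $(p1)$, which forces $p(\xi_k,u) \geq p(u,u) = a$ for every index $k$ and every fixed $u \in X$. Because of this, the absolute value appearing in the definition of the rough $\mathcal{I}$-statistical limit simplifies: $|p(\xi_k,u)-p(u,u)| = |p(\xi_k,u)-a| = p(\xi_k,u)-a$, a purely one-sided quantity. Hence the inequality $|p(\xi_k,u)-a| \geq r+\varepsilon$ is \emph{equivalent} to the threshold inequality $p(\xi_k,u) \geq a+r+\varepsilon$, which is precisely the shape of the condition $p(\xi_k,u) \geq M$ used in the definition of $\mathcal{I}$-statistical boundedness, under the identification $M = a+r$. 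The whole argument then reduces to a pair of set inclusions combined with the hereditary (downward-closed) property of the ideal $\mathcal{I}$.

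For the forward direction, I would assume $\{\xi_n\}$ is $\mathcal{I}$-statistically bounded, fix the reference point $u \in X$ and the associated constant $M>0$ from the definition. The first step is to observe that necessarily $M>a$: since $p(\xi_k,u)\geq a$ for all $k$, the choice $M\leq a$ would make $\{k\leq n:p(\xi_k,u)\geq M\}$ equal to all of $\{1,\dots,n\}$, forcing the set $A$ in the boundedness definition to be $\mathbb N$, contradicting $A\in\mathcal{I}$ for a non-trivial admissible ideal. I may therefore set $r:=M-a>0$. Then for arbitrary $\varepsilon,\delta>0$ the equivalence above yields $\{k\leq n:|p(\xi_k,u)-a|\geq r+\varepsilon\}=\{k\leq n:p(\xi_k,u)\geq M+\varepsilon\}\subseteq\{k\leq n:p(\xi_k,u)\geq M\}$, so the corresponding densities compare termwise, and the index set $\{n:\frac{1}{n}|\{k\leq n:|p(\xi_k,u)-a|\geq r+\varepsilon\}|\geq\delta\}$ is a subset of $A\in\mathcal{I}$. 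By downward closure it lies in $\mathcal{I}$, which gives $u\in\mathcal{I}-st-LIM^{r}\xi_{n}$ and hence $\mathcal{I}-st-LIM^{r}\xi_{n}\neq\phi$.

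For the converse, I would assume $\mathcal{I}-st-LIM^{r}\xi_{n}\neq\phi$ for some $r>0$ and choose $\xi$ in this set. Taking $u=\xi$ as the reference point and $M:=a+r+1$ (any $M>a+r$ works), the inclusion $\{k\leq n:p(\xi_k,\xi)\geq M\}\subseteq\{k\leq n:|p(\xi_k,\xi)-a|\geq r+1\}$ holds, because $p(\xi_k,\xi)\geq M$ forces $p(\xi_k,\xi)-a\geq r+1$ and the left-hand quantity is exactly the (nonnegative) value of $|p(\xi_k,\xi)-a|$. Applying the rough $\mathcal{I}$-statistical convergence of $\{\xi_n\}$ to $\xi$ with $\varepsilon=1$ and the given $\delta$, the index set attached to the $M$-threshold is contained in a set belonging to $\mathcal{I}$, so once more the heredity of $\mathcal{I}$ places it in $\mathcal{I}$, establishing $\mathcal{I}$-statistical boundedness.

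There is no deep obstacle in this argument; the content lies entirely in the translation $M=a+r$ made possible by the constant self-distance, and in the hereditary property of the ideal. The one point that needs care, and the only place the strictness $r>0$ demanded by the statement is actually used, is the observation that $M\leq a$ is incompatible with boundedness, which is exactly what guarantees that the roughness degree $r=M-a$ produced in the forward direction is strictly positive.
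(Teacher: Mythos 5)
Your proof is correct, and its overall skeleton (a set inclusion in each direction followed by the hereditary property of $\mathcal{I}$) matches the paper's; but the execution of the forward direction is genuinely different and in fact sharper. The paper never uses axiom $(p1)$: it bounds $|p(\xi_k,u)-p(u,u)|\leq |p(\xi_k,u)|+|p(u,u)|$ and therefore has to take the roughness degree $r=M+a$, after which it argues through the complementary ``good'' sets, converting $\frac{1}{n}|\{k\leq n:\cdots\}|<\delta$ into $>1-\delta$ and back. You instead observe that $(p1)$ forces $p(\xi_k,u)\geq a$, so $|p(\xi_k,u)-a|=p(\xi_k,u)-a$ and the rough-limit condition and the boundedness condition are literally the same threshold condition under $M=a+r$; this lets you take $r=M-a$, work directly with the ``bad'' sets, and skip the complement bookkeeping. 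The price is that you must justify $r=M-a>0$, which you do correctly by noting that $M\leq a$ would make the boundedness index set all of $\mathbb{N}$, contradicting non-triviality of $\mathcal{I}$ --- a point the paper never needs since $M+a$ is trivially positive. Your converse is essentially identical to the paper's (they take $M=r+a+\varepsilon$, you take $M=a+r+1$ with $\varepsilon=1$), again modulo your direct inclusion of bad sets versus their complement argument. Both proofs are valid; yours yields a smaller admissible roughness degree and exposes where the constant self-distance hypothesis is really doing work, while the paper's is more robust in that it does not rely on the one-sidedness coming from $(p1)$.
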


\begin{proof}
Let $\{\xi_{n}\}$ be a $\mathcal{I}$-statistically bounded sequence.
Then for any fixed element $u \in X$
 there exists a positive real number $M$ such that for $\delta>0$, we have 
 \begin{center}
     the set  $A=\{ n \in \mathbb N : \frac{1}{n}|\{ k \leq n:p(\xi_n,u) \geq M\}| \geq \delta\} \in \mathcal{I}$.
 \end{center}
 Let $\varepsilon>0$ be arbitrary and $r=M+a$.
Clearly $B=\mathbb N \setminus A$ is non-empty. Choose $n \in B$. Then we have 
\begin{center}
    $\frac{1}{n}|\{ k \leq n:p(\xi_n,u) \geq M\}| < \delta$ \\
    i.e., $\frac{1}{n}|\{ k \leq n:p(\xi_n,u) < M\}| > 1-\delta$
\end{center}
Put $C_n=\{ k \leq n:p(\xi_n,u) < M\}$. Choose $k \in C_n$. 
Then \begin{equation*}
    \begin{split}
          |p(\xi_k,u)-p(u,u)| & \leq |p(\xi_k,u)|+|p(u,u)| \\
        & < M+a =r <r+\varepsilon.
    \end{split}
\end{equation*}
Hence $C_n \subset \{ k \leq n: |p(\xi_k,u)-p(u,u)| <r+\varepsilon \}$.
This implies that
\begin{center}
    $1-\delta<\frac{|C_n|}{n} \leq \frac{1}{n} |\{ k \leq n: |p(\xi_n,u)-p(u,u)| <r+\varepsilon \}|$ \\
i.e., $\frac{1}{n} |\{ k \leq n: |p(\xi_n,u)-p(u,u)| \geq r+\varepsilon \}| < 1-(1-\delta)=\delta$.
\end{center}
Thus we have 
$\{ n \in \mathbb N : \frac{1}{n}|\{ k \leq n: |p(\xi_k,u)-p(u,u)| \geq r+\varepsilon\}| \geq \delta\} \subset A \in \mathcal{I}$.
This shows that $u \in \mathcal{I}-st-LIM^{r}\xi_{n}$ i.e., $\mathcal{I}-st-LIM^{r}\xi_{n} \neq \phi$. \\

Conversely, let $\mathcal{I}-st-LIM^{r}\xi_{n} \neq \phi$  and let  $u \in \mathcal{I}-st-LIM^{r}\xi_{n}$ . 
Then for any $\varepsilon >0$ and $\delta>0$, the set
$ A=\{ n \in \mathbb N : \frac{1}{n}|\{ k \leq n: |p(\xi_n,u)-p(u,u)| \geq r+\varepsilon\}| \geq \delta\} \in \mathcal{I}$. 
Since $\mathcal{I}$ is non trivial the set $B=\mathbb N \setminus A$ is non-empty. Then for $n \in B$ we have,
\begin{center}
    $\frac{1}{n}|\{ k \leq n: |p(\xi_n,u)-p(u,u)| \geq r+\varepsilon\}| < \delta$ \\
    i.e., $\frac{1}{n}|\{ k \leq n: |p(\xi_n,u)-p(u,u)| < r+\varepsilon\}| \geq 1-\delta$
\end{center}
Put $D_n=\{ k \leq n: |p(\xi_n,u)-p(u,u)| < r+\varepsilon\}$. Choose $k \in D_n$. Then 
\begin{equation*}
    \begin{split}
        p(\xi_k,u) & = |p(\xi_k,u)-p(u,u)+p(u,u)| \\
                & \leq |p(\xi_k,u)-p(u,u)| + |p(u,u)| \\
                & < r+a+\varepsilon = M(say).
    \end{split}
\end{equation*}
So $D_n \subset \{ k \leq n: p(\xi_k,u) < M\}$.
This implies that $1-\delta \leq \frac{|D_n|}{n} \leq \frac{1}{n} |\{ k \leq n: p(\xi_k,u) < M\}|$ \\
i.e., $\frac{1}{n} |\{ k \leq n: p(\xi_k,u) \geq M \}| < 1-(1-\delta)=\delta$.
Thus we have 
$\{ n \in \mathbb N : \frac{1}{n}|\{ k \leq n: p(\xi_k,u) \geq M \}| \geq \delta\} \subset A \in \mathcal{I}$.
Hence $\{\xi_{n}\}$ is $\mathcal{I}$-statistically bounded. 
\end{proof}

\begin{definition}\cite{PMROUGH3}
A subset $K$ of $\mathbb N$ is said to have $\mathcal{I}$-natural density $d_\mathcal{I}(K)$ if
\begin{center}
    $d_\mathcal{I}(K)=\mathcal{I}-lim_{n\to\infty}\frac{|K(n)|}{n}$ exists
\end{center}
 where $K(n)=\{{j\in {K}:j\leq n\}}$ and $|K(n)|$ represents the number of elements in $K(n)$.
\end{definition}

\begin{theorem}
    Let $\{\xi_{n}\}$ be a sequence in $X$ and $r>0$ be any real number. If $\{\xi_{n}\}$ has a subsequence $\{ \xi_{{i}_{k}}\}$ 
    satisfying the condition $\mathcal{I}-lim_{n\to\infty}\frac{1}{n}|\{i_k \leq n: k \in \mathbb{N}\}|=1$, then $\mathcal{I}-st-LIM^{r}\xi_{n} \subseteq \mathcal{I}-st-LIM^{r}\xi_{i_{k}}$. 
\end{theorem}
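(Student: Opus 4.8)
The plan is to fix a point $\xi \in \mathcal{I}-st-LIM^{r}\xi_{n}$ and prove that $\xi \in \mathcal{I}-st-LIM^{r}\xi_{i_{k}}$, where the latter is understood for the re-indexed sequence $\{\xi_{i_{k}}\}_{k}$. So I would fix $\varepsilon>0$ and $\delta>0$, write $K=\{i_{k}:k \in \mathbb N\}$ with $|K(n)|=|\{i_{k} \leq n\}|$, and put $B=\{j \in \mathbb N: |p(\xi_{j},\xi)-p(\xi,\xi)| \geq r+\varepsilon\}$ together with $f(n)=\frac{1}{n}|\{j \leq n: j \in B\}|$. The membership $\xi \in \mathcal{I}-st-LIM^{r}\xi_{n}$ says precisely that $\{n \in \mathbb N: f(n) \geq \delta'\} \in \mathcal{I}$ for every $\delta'>0$, i.e. $\mathcal{I}-lim_{n} f(n)=0$; likewise the hypothesis is just $d_\mathcal{I}(K)=1$. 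Note that once $B$ is fixed the whole argument is combinatorial: the partial metric enters only in the definition of $B$.

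First I would record the counting identity linking the two index scales. The first $m$ terms of the subsequence occupy the original positions $i_{1}<\cdots<i_{m}$, so the number of bad terms among them is $|\{k \leq m: i_{k} \in B\}|=|B \cap K \cap \{1,\dots,i_{m}\}| \leq |B \cap \{1,\dots,i_{m}\}|$, while $m=|K(i_{m})|$ by definition of the enumeration. Writing $g(m)=\frac{1}{m}|\{k \leq m: i_{k} \in B\}|$ and dividing, a short estimate yields
\[
g(m) \leq f(i_{m}) + \left(\frac{i_{m}}{m}-1\right),
\]
where the first term is the full-sequence bad density read off at the original index $i_{m}$ and the second measures the density deficiency of $K$ up to $i_{m}$.

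Next I would convert this into a set inclusion. If $g(m) \geq \delta$, then at least one of $f(i_{m}) \geq \frac{\delta}{2}$ or $\frac{i_{m}}{m}-1 \geq \frac{\delta}{2}$ holds, so
\[
\{m: g(m) \geq \delta\} \subseteq \{m: i_{m} \in P\} \cup \{m: i_{m} \in Q\},
\]
where $P=\{n: f(n) \geq \frac{\delta}{2}\}$ and $Q=\{n: \frac{|K(n)|}{n} \leq \frac{1}{1+\delta/2}\}$. By the opening reduction $P \in \mathcal{I}$, and since $\frac{1}{1+\delta/2}=1-\theta_{0}$ with $\theta_{0}=\frac{\delta/2}{1+\delta/2}>0$, the hypothesis $d_\mathcal{I}(K)=1$ gives $Q \subseteq \{n: |\,\frac{|K(n)|}{n}-1\,| \geq \theta_{0}\} \in \mathcal{I}$. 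Hence $P \cup Q \in \mathcal{I}$, and it remains only to pass from this set of original indices to the corresponding set of subsequence positions.

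That final passage is where I expect the real difficulty to lie. I must deduce $\{m: i_{m} \in P \cup Q\} \in \mathcal{I}$ from $P \cup Q \in \mathcal{I}$, i.e. control the preimage of an $\mathcal{I}$-set under the enumeration $k \mapsto i_{k}$. A naive appeal to ``the preimage of an $\mathcal{I}$-set is an $\mathcal{I}$-set'' is not available, since this can fail for a general admissible ideal; thus the density hypothesis must be used again here, not merely to produce $Q$. The plan is to exploit that $f$ and $n \mapsto \frac{|K(n)|}{n}$ are slowly varying Cesàro-type averages and that, by $d_\mathcal{I}(K)=1$, the omitted indices $\mathbb N \setminus K$ carry $\mathcal{I}$-density zero, so that $i_{m}/m$ tends to $1$ in the relevant sense and a block of subsequence positions lying over $P \cup Q$ is forced to sit over a comparable block of original indices trapped inside a member of $\mathcal{I}$; then downward closure of $\mathcal{I}$ finishes the proof for the fixed $\varepsilon,\delta$, and letting $\varepsilon,\delta$ vary gives $\xi \in \mathcal{I}-st-LIM^{r}\xi_{i_{k}}$. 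Relative to this block-comparison, every other step is routine.
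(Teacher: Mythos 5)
Your argument is sound up to the reduction $\{m: g(m)\geq\delta\}\subseteq\{m: i_{m}\in P\}\cup\{m: i_{m}\in Q\}$ with $P,Q\in\mathcal{I}$, but the final step --- deducing $\{m: i_{m}\in P\cup Q\}\in\mathcal{I}$ --- is exactly where the proof is missing, and you have not supplied it. You correctly observe that preimages under $k\mapsto i_{k}$ of members of an arbitrary admissible ideal need not lie in the ideal (indeed, since $i_{m}\geq m$ the map is expanding, so $\{m: i_{m}\in S\}$ can be far ``larger'' than $S$ in the ideal sense, and downward closure is of no use). But the proposed repair is only a plan: the phrases ``slowly varying Ces\`{a}ro-type averages'' and ``block comparison'' do not translate into ideal membership for a general $\mathcal{I}$, because membership of $\{n: f(n)\geq\delta/2\}$ in $\mathcal{I}$ carries no quantitative information about where that set sits or how long its blocks are. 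Moreover, your reduction asks for something strictly stronger than the theorem: you need only $\{m: g(m)\geq\delta\}\in\mathcal{I}$, and there is no reason the superset $\{m: i_{m}\in P\cup Q\}$ should also lie in $\mathcal{I}$. The difficulty is created by your bound $g(m)\leq f(i_{m})+(\tfrac{i_{m}}{m}-1)$, which evaluates the controlled quantities at the shifted index $i_{m}$ rather than at $m$.

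The paper's proof avoids re-indexing entirely by working with the complementary ``good'' set $G=\{j: |p(\xi_{j},\xi)-p(\xi,\xi)|<r+\varepsilon\}$ and aiming at a density-one statement, which only needs a lower bound. Since $d_{\mathcal{I}}(G)=d_{\mathcal{I}}(K)=1$, also $d_{\mathcal{I}}(G\cap K)=1$; and since the $i_{k}$ are strictly increasing one has $i_{k}\leq n\Rightarrow k\leq n$, whence
\[
1\;\geq\;\frac{1}{n}\bigl|\{k\leq n: i_{k}\in G\}\bigr|\;\geq\;\frac{1}{n}\bigl|\{k: i_{k}\leq n,\ i_{k}\in G\}\bigr|\;=\;\frac{|(G\cap K)(n)|}{n}.
\]
Both outer quantities are indexed by the same $n$, so the squeeze property of $\mathcal{I}$-limits applies directly ($\{n: \frac{1}{n}|\{k\leq n: i_{k}\in G\}|\leq 1-\theta\}$ is contained in a member of $\mathcal{I}$), giving $\mathcal{I}\text{-}\lim_{n}\frac{1}{n}|\{k\leq n: i_{k}\in G\}|=1$ and hence $\xi\in\mathcal{I}\text{-}st\text{-}LIM^{r}\xi_{i_{k}}$. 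You should restructure your comparison along these lines; as written, the proposal has a genuine gap at its decisive step.
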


\begin{proof}
Let $\xi \in \mathcal{I}-st-LIM^{r}\xi_{n}$.
Let $\varepsilon>0$ be given.
Since $\xi \in \mathcal{I}-st-LIM^{r}\xi_{n}$, we have
$\mathcal{I}-lim_{n\to\infty}\frac{1}{n} |\{ k \leq n: |p(\xi_{k},\xi)-p(\xi,\xi)| \geq r+\varepsilon\}|=0$, i.e.,
$\mathcal{I}-lim_{n\to\infty}\frac{1}{n} |\{ k \leq n: |p(\xi_{k},\xi)-p(\xi,\xi)|< r+\varepsilon\}|=1$.....(1).
Again by the given condition we have 
$\mathcal{I}-lim_{n\to\infty}\frac{1}{n}|\{i_k \leq n: k \in \mathbb{N}\}|=1$.....(2).
Let $A=\{i_k: k \in \mathbb{N}\}$. Then by (2) we have $d_\mathcal{I}(A)=1$, where $d_{\mathcal{I}}(A)=\mathcal{I}-lim_{n\to\infty} \frac{1}{n}|\{k \leq n: k \in A\}|$ and so, $d_\mathcal{I}(\mathbb{N} \setminus A)=0$.
Then from (1), we have 
$\mathcal{I}-lim_{n\to\infty}\frac{1}{n} |\{ i_k \leq n: |p(\xi_{k},\xi)-p(\xi,\xi)|< r+\varepsilon\}|=1$.....(3).
Now, $1 \geq \frac{1}{n} |\{k \leq n: |p(\xi_{i_k},\xi)-p(\xi,\xi)|< r+\varepsilon\}|=\frac{1}{n} |\{i_k \leq i_n: |p(\xi_{i_k},\xi)-p(\xi,\xi)|< r+\varepsilon\}|$.....(4).
Also, $\frac{1}{n} |\{i_k \leq i_n: |p(\xi_{i_k},\xi)-p(\xi,\xi)|< r+\varepsilon\}| \geq \frac{1}{n} |\{i_k \leq n: |p(\xi_{i_k},\xi)-p(\xi,\xi)|< r+\varepsilon\}|$.....(5).
Thus from (3), (4), (5) and by the property of $\mathcal{I}$-convergence, we have 
$\mathcal{I}-lim_{n\to\infty}\frac{1}{n} |\{k \leq n: |p(\xi_{i_k},\xi)-p(\xi,\xi)|< r+\varepsilon\}|=1$, i.e.,
$\mathcal{I}-lim_{n\to\infty}\frac{1}{n} |\{k \leq n: |p(\xi_{i_k},\xi)-p(\xi,\xi)|\geq r+\varepsilon\}|=0$.
Hence $\{\xi_{i_k} \}$ is $r-\mathcal{I}-$statistical convergent to $\xi$ i.e., $\xi \in \mathcal{I}-st-LIM^{r}\xi_{i_{k}}$.
Therefore $\mathcal{I}-st-LIM^{r}\xi_{n} \subseteq \mathcal{I}-st-LIM^{r}\xi_{i_{k}}$.
\end{proof}

\begin{theorem}
Let $(X, p)$ be a partial metric space and let $\{a_{n}\}$ and $\{b_{n}\}$ be two sequences such that $p(a_{n}, b_{n}) \longrightarrow 0$ as $n \longrightarrow \infty$.
Then $\{a_{n}\}$ is rough $\mathcal{I}$-statistically convergent to $a$ and $p(a_{n},a_{n}) \longrightarrow 0$ as $n \longrightarrow \infty$ if and only if $\{b_{n}\}$ is rough $\mathcal{I}$-statistically convergent to $a$. 
\end{theorem}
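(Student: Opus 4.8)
The plan is to reduce the whole equivalence to a single pointwise estimate, $|p(a_k,a)-p(b_k,a)|\le p(a_k,b_k)$, and then to use the ordinary convergence $p(a_k,b_k)\to 0$ to conclude that the discrepancy between the two sequences is controlled outside a finite set of indices, which is harmless both for the statistical density and for the admissible ideal $\mathcal I$. First I would derive this bound from the axioms: triangularity $(p4)$ gives $p(a_k,a)\le p(a_k,b_k)+p(b_k,a)-p(b_k,b_k)$ and $p(b_k,a)\le p(b_k,a_k)+p(a_k,a)-p(a_k,a_k)$, and dropping the nonnegative self-distances via $(p1)$ yields $|p(a_k,a)-p(b_k,a)|\le p(a_k,b_k)$ for every $k$. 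Because $p$ is symmetric, the bound is unchanged when $a_k$ and $b_k$ are interchanged, and this symmetry is exactly what makes the statement a genuine equivalence rather than a one-sided implication.

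For the forward implication I would fix $\varepsilon>0$ and $\delta>0$. Since $p(a_k,b_k)\to 0$, the set $E=\{k\in\mathbb N:p(a_k,b_k)\ge\frac{\varepsilon}{2}\}$ is finite, say $|E|=N_0$, and the pointwise bound forces the inclusion
\begin{equation*}
\{k\le n:|p(b_k,a)-p(a,a)|\ge r+\varepsilon\}\subseteq\{k\le n:|p(a_k,a)-p(a,a)|\ge r+\frac{\varepsilon}{2}\}\cup E .
\end{equation*}
Dividing by $n$, the density for $\{b_n\}$ at level $n$ exceeds the density for $\{a_n\}$ (taken at radius $r+\frac{\varepsilon}{2}$) by at most $N_0/n$. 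Choosing $N_1$ with $N_0/N_1<\frac{\delta}{2}$, every $n\ge N_1$ whose $b$-density is $\ge\delta$ must have $a$-density $\ge\frac{\delta}{2}$; hence the set $B=\{n:\frac{1}{n}|\{k\le n:|p(b_k,a)-p(a,a)|\ge r+\varepsilon\}|\ge\delta\}$ satisfies $B\subseteq A'\cup\{1,\dots,N_1-1\}$, where $A'$ is the $\mathcal I$-set supplied by the hypothesis on $\{a_n\}$ with parameters $\frac{\varepsilon}{2},\frac{\delta}{2}$. As $A'\in\mathcal I$ and the finite tail lies in $\mathcal I$ by admissibility, $B\in\mathcal I$, so $\{b_n\}$ is rough $\mathcal I$-statistically convergent to $a$.

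The converse runs through the same inequalities with $a_k$ and $b_k$ swapped, using symmetry of $p$, and yields the rough $\mathcal I$-statistical convergence of $\{a_n\}$ to $a$; the remaining clause $p(a_n,a_n)\to 0$ comes for free, since $(p1)$ gives $0\le p(a_n,a_n)\le p(a_n,b_n)\to 0$. The step I expect to be the main obstacle is the bookkeeping across the two nested layers of the definition: the finitely many indices in $E$ contribute the correction $N_0/n$ to the inner statistical average, and this correction must be absorbed into $\frac{\delta}{2}$ for all but finitely many $n$, those exceptional $n$ then being swept into $\mathcal I$ by admissibility. Keeping the $\varepsilon$-split (radius $r+\varepsilon$ versus $r+\frac{\varepsilon}{2}$) aligned with the $\delta$-split (threshold $\delta$ versus $\frac{\delta}{2}$) throughout is the real content of the argument.
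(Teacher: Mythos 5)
Your proof is correct, and while it rests on the same basic engine as the paper's --- the triangle inequality $(p4)$ transfers the estimate from $a_k$ to $b_k$, and the ordinary convergence $p(a_k,b_k)\to 0$ confines the discrepancy to finitely many indices --- your decomposition is genuinely different and in two respects tighter. First, the paper splits $\varepsilon$ into three thirds, spending one third on $p(a_k,b_k)$ and another on $p(a_k,a_k)$, so the hypothesis $p(a_n,a_n)\to 0$ enters its estimate explicitly; your lemma $|p(a_k,a)-p(b_k,a)|\le p(a_k,b_k)$ discards the self-distances at the level of $(p4)$, where they occur with a negative sign, so the estimate needs no control of $p(a_n,a_n)$ at all, and that clause survives only as a trivial consequence of $(p1)$ in the converse. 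Second, and more substantively, the paper's argument fixes $m$ so that its bounds hold for $k\ge m$ but then asserts the inclusion $B_n\subset\{k\le n:|p(b_k,a)-p(a,a)|<r+\varepsilon\}$ for \emph{all} $k\in B_n$, never accounting for the finitely many $k<m$ in the density count; your explicit $N_0/n$ correction, absorbed into the $\delta/2$ threshold for $n\ge N_1$ with the leftover $\{1,\dots,N_1-1\}$ swept into $\mathcal I$ by admissibility, is exactly the bookkeeping the paper omits. The cost of your route is the mildly fussier alignment of the $\varepsilon$-split with the $\delta$-split; the gain is a symmetric lemma that makes the equivalence manifest and a proof with no unhandled exceptional indices.
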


\begin{proof}
First suppose that $\{a_{n}\}$ be rough $\mathcal{I}$-statistically convergent to $a$. Let $\varepsilon>0$.
Then for $\varepsilon>0$ and any $\delta>0$ the set 
\begin{center}
    $A=\{n \in \mathbb N : \frac{1}{n}|\{k \leq n: |p(a_k,a)-p(a,a)|  \geq r+\frac{\varepsilon}{3} \} | \geq \delta\} \in \mathcal{I}$.
\end{center}
Since $p(a_{n},b_{n}) \longrightarrow 0$ as $n \longrightarrow \infty$, for $\varepsilon>0$ there exists $m_1 \in \mathbb N$ such that
\begin{equation} \label{a}
     p(a_{n},b_{n})\leq \frac{\varepsilon}{3}, \ \text{whenever} \ n \geq m_1.
 \end{equation}

Again, since $p(a_{n},a_{n}) \longrightarrow 0$ as $n \longrightarrow \infty$ there exists $ m_2 \in \mathbb N$ such that 
\begin{equation} \label{b}
    p(a_{n},a_{n})\leq \frac{\varepsilon}{3}, \ \text{whenever} \ n \geq m_2.
\end{equation}
Let $ m= \ max \{ m_1, m_2 \}$. Then equation (3.1) and (3.2) both hold for $n \geq m$. \\
Let $n \notin A$. Then we have \\ 
$\frac{1}{n}|\{k \leq n: |p(a_n,a)-p(a,a)|  \geq r+\frac{\varepsilon}{3} \} | < \delta$ \\
i.e., $\frac{1}{n}|\{k \leq n: |p(a_n,a)-p(a,a)| < r+\frac{\varepsilon}{3} \} | \geq 1-\delta$.\\
Let $B_n=\{k \leq n: |p(a_n,a)-p(a,a)| < r+\frac{\varepsilon}{3} \}$. 
Then for $k \in B_n$, we have \\
 \begin{equation*}
    \begin{split}
 p(b_{k},a) &  \leq p(b_{k},a_{k})+p(a_{k},a)-p(a_{k},a_{k}) \\
 p(b_{k},a) - p(a,a) & \leq p(b_{k},a_{k})+p(a_{k},a)-p(a_{k},a_{k}) - p(a,a) \\ |p(b_{k},a) - p(a,a)|=p(b_k,a)-p(a,a)
    & \leq |p(b_{k},a_{k}) + p(a_{k},a) - p(a_{k},a_{k}) - p(a,a)| \\
    & \leq |p(b_{k},a_{k})| + |p(a_{k},a) - p(a,a)| + |p(a_{k},a_{k})|\\
    & < \frac{\varepsilon}{3} + (r+ \frac{\varepsilon}{3})+\frac{\varepsilon}{3} \\
    &  = r + \varepsilon 
    \end{split}
\end{equation*}
Hence $B_n \subset \{k \leq n: |p(b_{k},a) - p(a,a)|< r+\varepsilon\}$. This implies that 
\begin{center}
    $\frac{|B_n|}{n} \leq \frac{1}{n} |\{ k \leq n: |p(b_{k},a) - p(a,a)| <r+\varepsilon \}|$ \\
i.e., $\frac{1}{n} |\{ k \leq n: |p(b_{k},a) - p(a,a)| <r+\varepsilon \}| \geq (1-\delta)$ \\
So, $\frac{1}{n} |\{ k \leq n: |p(b_{k},a) - p(a,a)| \geq r+\varepsilon \}| < 1-(1-\delta)=\delta$.
\end{center}
Hence $\{n \in \mathbb N : \frac{1}{n}|\{k \leq n: |p(b_k,a)-p(a,a)|  \geq r+\varepsilon\}| \geq \delta\} \subset A$.
Since $A \in \mathcal{I}$, we have
$\{n \in \mathbb N : \frac{1}{n}|\{k \leq n: |p(b_k,a)-p(a,a)|  \geq r+\varepsilon\}| \geq \delta\} \in \mathcal{I}$.
So,  $\{b_{n}\}$ is rough $\mathcal{I}$-statistically convergent to $a$. \\
Converse part is similar.
\end{proof}

\begin{theorem}
Let $\{a_{n}\}$ and $\{b_{n}\}$ be two sequences in a partial metric space $(X, p)$ such that $p(a_{n},b_{n}) \longrightarrow 0$ as $ n \longrightarrow \infty$. 
If $\{a_{n}\}$ is rough $\mathcal{I}$-statistically convergent to $a$ of roughness degree $r$ and if $c$ is a positive number such that $ p(a_{n},a_{n}) \leq c$ for all $n$, then $\{b_{n}\}$ is rough $\mathcal{I}$-statistically convergent to $a$ of roughness degree $r+c$. 
Conversely, if $\{b_{n}\}$ is rough $\mathcal{I}$-statistically convergent to $b$ of roughness degree $r$ and $d$ is a positive number such that $p(b_{n},b_{n})\leq d$ for all $n$, then $\{a_{n}\}$ is rough $\mathcal{I}$-statistically convergent to $b$ of roughness degree $r+d$.      
\end{theorem}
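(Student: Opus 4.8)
The plan is to follow the architecture of the proof of Theorem 3.6, the only structural change being that the self-distance $p(a_n,a_n)$, which there tends to $0$, is here merely bounded by the constant $c$; this constant is exactly what inflates the roughness degree from $r$ to $r+c$. For the direct implication I would assume $\{a_n\}$ is rough $\mathcal{I}$-statistically convergent to $a$ of degree $r$, fix $\varepsilon>0$ and $\delta>0$, and invoke the hypothesis with tolerance $\tfrac{\varepsilon}{2}$, so that
\[
A=\Big\{n\in\mathbb N:\tfrac1n\big|\{k\le n:|p(a_k,a)-p(a,a)|\ge r+\tfrac{\varepsilon}{2}\}\big|\ge\delta\Big\}\in\mathcal{I}.
\]
Since $p(a_n,b_n)\to 0$, I would next pick $m_1$ with $p(a_n,b_n)<\tfrac{\varepsilon}{2}$ for all $n\ge m_1$.

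The key step is the partial-metric estimate. For an index $k\ge m_1$ lying in $B_n=\{k\le n:|p(a_k,a)-p(a,a)|<r+\tfrac{\varepsilon}{2}\}$, I would apply $(p4)$ with middle point $a_k$ to get $p(b_k,a)\le p(b_k,a_k)+p(a_k,a)-p(a_k,a_k)$. Subtracting $p(a,a)$ and using that $p(b_k,a)\ge p(a,a)$ (so the left side equals $|p(b_k,a)-p(a,a)|$ and is nonnegative), the triangle inequality for absolute values bounds it by the three nonnegative terms $p(b_k,a_k)+|p(a_k,a)-p(a,a)|+p(a_k,a_k)$, which are in turn at most $\tfrac{\varepsilon}{2}$, $r+\tfrac{\varepsilon}{2}$ and $c$. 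Hence $|p(b_k,a)-p(a,a)|<r+c+\varepsilon$ for every such $k$.

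It follows that $B_n$, apart from the finitely many indices $k<m_1$ (which form a density-zero set and contribute nothing in the limit), is contained in $\{k\le n:|p(b_k,a)-p(a,a)|<r+c+\varepsilon\}$, so on $\mathbb N\setminus A$ the density of the complementary bad set for $\{b_n\}$ stays below $\delta$. This gives
\[
\Big\{n:\tfrac1n\big|\{k\le n:|p(b_k,a)-p(a,a)|\ge (r+c)+\varepsilon\}\big|\ge\delta\Big\}\subseteq A\in\mathcal{I},
\]
so $\{b_n\}$ is rough $\mathcal{I}$-statistically convergent to $a$ of degree $r+c$. For the converse I would argue symmetrically: by $(p3)$, $p(b_n,a_n)=p(a_n,b_n)\to 0$, and replacing the bound $p(a_n,a_n)\le c$ by $p(b_n,b_n)\le d$ and interchanging the roles of $a_n,b_n$ (and of $a,b$) in the same estimate shows $\{a_n\}$ is rough $\mathcal{I}$-statistically convergent to $b$ of degree $r+d$.

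I expect the main obstacle to be the key triangle-type estimate: one must apply $(p4)$ correctly, exploit $p(b_k,a)\ge p(a,a)$ to remove the absolute value, and—unlike in Theorem 3.6—keep the self-distance term as the constant $c$ (taken with a $+$ sign via the absolute-value inequality rather than the $-$ sign it carries in $(p4)$), since this is precisely what produces the extra $c$ in the roughness degree. The only other point requiring care is the routine observation that the finitely many indices $k<m_1$ on which $p(a_k,b_k)<\tfrac{\varepsilon}{2}$ may fail have natural density zero and can therefore be discarded without affecting the conclusion.
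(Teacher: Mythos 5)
Your proof is correct and follows exactly the route the paper intends: the paper omits this proof, stating only that it is ``parallel to the proof of the above theorem,'' and your argument is precisely that parallel adaptation of Theorem 3.6's proof, with the bounded self-distance $p(a_k,a_k)\le c$ (resp.\ $p(b_k,b_k)\le d$) replacing the vanishing one and producing the enlarged roughness degree $r+c$ (resp.\ $r+d$). Your explicit treatment of the finitely many indices $k<m_1$, absorbed via admissibility of $\mathcal{I}$, is a detail the paper's model proof glosses over, and is handled correctly.
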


\begin{proof}
The proof is parallel to the proof of the above theorem and so is omitted. 
\end{proof}

\begin{definition}
Let $(X, p)$ be a partial metric space. Then a point $c \in X$ is said to be an $\mathcal{I}$--statistical cluster point of a sequence $\{\xi_{n}\}$ if for every $\varepsilon>0$ 
\begin{center}
    $d_{\mathcal{I}}(\{k: |p(\xi_k,c)-p(c,c)| < \varepsilon \}) \neq 0$, if exists,\\
\end{center}
where $d_{\mathcal{I}}(A)=\mathcal{I}-lim_{n\to\infty} \frac{1}{n}|\{k \leq n: k \in A\}|, A=\{k: |p(\xi_k,c)-p(c,c)| < \varepsilon \}$.
\end{definition}

\begin{theorem}
 Let $\{\xi_{n}\}$ be a sequence in a partial metric space $(X, p)$ and $p(\xi,\xi)=a$ for all $\xi \in X$, where $a$ be a real constant. If $c$ is a $\mathcal{I}$-statistical cluster point of $\{\xi_{n}\}$, then $\mathcal{I}-st-LIM^{r}\xi_{n}  \subset \overline{B^{p}_{r}}(c)$ for $r >0$.
\end{theorem}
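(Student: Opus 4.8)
The plan is to argue by contradiction, splitting the available slack between the roughness tolerance and the cluster tolerance. Suppose $\xi \in \mathcal{I}-st-LIM^{r}\xi_{n}$ but $\xi \notin \overline{B^{p}_{r}}(c)$; since $p(c,c)=a$, the closed ball is $\overline{B^{p}_{r}}(c)=\{y\in X: p(c,y)\le a+r\}$, so the assumption reads $p(c,\xi)>a+r$. I would fix $\lambda=\frac{1}{3}\big(p(c,\xi)-a-r\big)>0$ and use this single $\lambda$ both as the cluster radius and as the roughness tolerance $\varepsilon$.

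First I would record what each hypothesis supplies for this $\lambda$. The $\mathcal{I}$-statistical cluster point property gives that $K_{1}=\{k: |p(\xi_{k},c)-a|<\lambda\}$ has positive $\mathcal{I}$-natural density, say $d_{\mathcal{I}}(K_{1})=\gamma>0$. Applying $\xi\in\mathcal{I}-st-LIM^{r}\xi_{n}$ with $\varepsilon=\lambda$ and $\delta=\gamma/2$ shows that $A=\{n: \frac{1}{n}|\{k\le n: |p(\xi_{k},\xi)-a|\ge r+\lambda\}|\ge \gamma/2\}\in\mathcal{I}$; write $K_{2}^{c}=\{k: |p(\xi_{k},\xi)-a|\ge r+\lambda\}$ for the inner set.

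The pointwise heart of the argument is a triangle-inequality estimate showing $K_{1}\subseteq K_{2}^{c}$. For $k\in K_{1}$, the partial-metric inequality $p(c,\xi)\le p(c,\xi_{k})+p(\xi_{k},\xi)-p(\xi_{k},\xi_{k})$ together with $p(\xi_{k},\xi_{k})=a$ and $p(c,\xi_{k})<a+\lambda$ yields $p(\xi_{k},\xi)>p(c,\xi)-(a+\lambda)+a=a+r+2\lambda$; since $p(\xi_{k},\xi)\ge p(\xi,\xi)=a$ by $(p1)$, this gives $|p(\xi_{k},\xi)-a|=p(\xi_{k},\xi)-a>r+2\lambda>r+\lambda$, so $k\in K_{2}^{c}$. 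Consequently $|K_{1}(n)|\le|K_{2}^{c}(n)|$, hence $\frac{|K_{1}(n)|}{n}\le\frac{|K_{2}^{c}(n)|}{n}$ for every $n$.

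Finally I would convert the two density statements into a clash between the ideal and its filter. From $d_{\mathcal{I}}(K_{1})=\gamma$ and the definition of the $\mathcal{I}$-limit with $\eta=\gamma/2$, the set $\{n:\frac{|K_{1}(n)|}{n}>\gamma/2\}$ belongs to $\mathcal{F}(\mathcal{I})$; by the inclusion above, so does its superset $\{n:\frac{|K_{2}^{c}(n)|}{n}>\gamma/2\}$, which therefore cannot lie in $\mathcal{I}$. But this set is contained in $A\in\mathcal{I}$, so it does lie in $\mathcal{I}$, contradicting the non-triviality of $\mathcal{I}$. The contradiction forces $p(c,\xi)\le a+r$, that is, $\xi\in\overline{B^{p}_{r}}(c)$. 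I expect the one genuinely delicate point to be the bookkeeping of strict versus non-strict inequalities when passing from the strict condition $\frac{|K_{2}^{c}(n)|}{n}>\gamma/2$ that the filter supplies to the non-strict threshold $\ge\gamma/2$ that the limit-set hypothesis controls; the geometric estimate is routine once the slack $\lambda$ has been reserved on both sides.
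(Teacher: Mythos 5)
Your proposal is correct and follows essentially the same route as the paper: argue by contradiction, reserve a fraction of the slack $p(c,\xi)-a-r$ as the common tolerance, use the partial-metric triangle inequality $(p4)$ together with $p(\xi_k,\xi_k)=a$ to show the cluster set $\{k:|p(\xi_k,c)-a|<\lambda\}$ is contained in the exceptional set $\{k:|p(\xi_k,\xi)-a|\ge r+\lambda\}$, and then derive a density contradiction. Your only departures are cosmetic (a factor $\tfrac13$ instead of $\tfrac12$, and a more explicit $\mathcal{F}(\mathcal{I})$-versus-$\mathcal{I}$ bookkeeping at the end, which is if anything cleaner than the paper's one-line appeal to $d_{\mathcal{I}}\neq 0$).
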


\begin{proof}
If possible, let there exists $\xi \in \mathcal{I}-st-LIM^{r}\xi_{n}$ such that $p(\xi,c)>p(c,c)+r$.
Let $\varepsilon=\frac{p(\xi,c)-p(c,c)-r}{2}$.
Then we claim that $\{ k \in \mathbb{N}: |p(\xi_{k},\xi)-p(\xi,\xi)| \geq r+\varepsilon\} \supset \{k \in \mathbb{N}: |p(\xi_k,c)-p(c,c)| < \varepsilon \}$.....(1).
For, let $k \in \{k \in \mathbb{N}: |p(\xi_k,c)-p(c,c)| < \varepsilon \}$. \\ 
Then $|p(\xi_k,c)-p(c,c)| < \varepsilon \implies -|p(\xi_k,c)-p(c,c)| >- \varepsilon$.\\
Now, \begin{equation*}
    \begin{split}
        p(\xi,c) & \leq p(\xi_k,\xi)+p(\xi_k,c)-p(\xi_k,\xi_k) \\  \implies p(\xi,c) & \leq p(\xi_k,\xi)+p(\xi_k,c)-p(\xi,\xi), \text{as} \ p(\xi,\xi)=a \ \forall \ \xi \in X \\
 \implies p(\xi_k,\xi)-p(\xi,\xi) & \geq p(\xi,c)-p(\xi_k,c) \\
                       & = p(\xi,c)-p(\xi_k,c)-p(c,c)+p(c,c) \\
                       & = p(\xi,c)-p(c,c)-\{p(\xi_k,c)-p(c,c)\} \\
                       &= p(\xi,c)-p(c,c)-|p(\xi_k,c)-p(c,c)| \\
                       & \geq 2\varepsilon + r- \varepsilon \\
                       &  =r+\varepsilon \\
    \end{split}
\end{equation*}
So, $|p(\xi_k,\xi)-p(\xi,\xi)| \geq r+\varepsilon$.                     Therefore, $k \in \{ k \in \mathbb{N}: |p(\xi_{k},\xi)-p(\xi,\xi)| \geq r+\varepsilon\}$. \\
Since $c$ is a $\mathcal{I}$-statistical cluster point of $\{\xi_{n}\}$, so
$d_{\mathcal{I}}(\{k \in \mathbb{N}: |p(\xi_k,c)-p(c,c)| < \varepsilon \}) \neq 0$.
Hence by (1) $d_{\mathcal{I}}(\{ k \in \mathbb{N}: |p(\xi_{k},\xi)-p(\xi,\xi)| \geq r+\varepsilon\}) \neq 0$, which contradicts $\xi \in \mathcal{I}-st-LIM^{r}\xi_{n}$.
Hence $p(\xi,c) \leq p(c,c)+r \implies \xi \in \overline{B^{p}_{r}}(c)$.
Therefore, $\mathcal{I}-st-LIM^{r}\xi_{n}  \subset \overline{B^{p}_{r}}(c)$.
\end{proof}

\subsection*{Acknowledgements}
Authors are also thankful to DST, Govt. of India, for providing the FIST project to the Department of Mathematics, B.U. \\


\begin{thebibliography}{99}

\bibitem{AYTER1}
S. Aytar, 
\textit{Rough statistical convergence}, 
Numer. Funct. Anal. Optim., 
29 (3-4), (2008), 291-303.

\bibitem{AKB}
A. K. Banerjee and A. Dey, 
\textit{Metric Spaces and Complex Analysis}, 
New Age International (P) Limited, Publication, (2008), 
ISBN-10: 81-224-2260-8, ISBN-13: 978-81-224-2260-3.

\bibitem{SUK2}
A. K. Banerjee and S. Khatun, 
\textit{Rough convergence of sequences in a partial metric space,}
arXiv: 2211.03463, (2022).

\bibitem{RMROUGH}
A. K. Banerjee and R. Mondal,
\textit{Rough convergence of sequences in a cone metric space}, 
J. Anal., 
27(3-4), (2019), 1179–1188.

\bibitem{AKBAP1}
A.K. Banerjee and A. Paul,
\textit{$\mathcal{I}$-divergence and $\mathcal{I^*}$-divergence in cone-metric spaces},
Asian European Journal of Mathematics, 13(08),2050139.

\bibitem{AKBPAL}
A. K. Banerjee and A. Paul, \textit{On $\mathcal{I}$ and $\mathcal{I^*}$- Cauchy condition in $C^*$-algebra valued metric spaces},
Korean J. Math.,29 (3-4), (2021), 621-629.

\bibitem{AKBAP2}
A. K. Banerjee and A. Paul,
\textit{Rough $\mathcal{I}$-convergence in cone-metric spaces},
arXiv: 1908.02115.

\bibitem{AKBMP}
A. K. Banerjee and M. Paul,
\textit{ Strong $\mathcal{I^K}$-Convergence in Probabilistic Metric Spaces},
arXiv: 1808.03268.

\bibitem{BMW}
D. Bugajewski, P. Mackowiak, and R. Wang, 
\textit{On Compactness and Fixed Point Theorems in Partial Metric Spaces}, 
Fixed Point Theory, 
23 (1), (2022), 163-178.

\bibitem{MATW2}
 M. Bukatin, R. Kopperman, S. Matthews, H. Pajoohesh: Partial metric spaces. Am. Math. Mon. 116, 708-718 (2009).

\bibitem{KD}
 K. Demirci, \textit{I-limit superior and limit inferior}, 
 Math. Commun., 
 6 (2), (2001), 165–172.

\bibitem{PD}
P. Das, S. K. Pal, K. and S. K. Ghosal, \textit{ Some further remarks on ideal summability in 2-normed spaces}, 
Appl. Math. Lett., 
24, (2011), 39–43. 

\bibitem{PD2}
P. Das, E. Savas and S. K. Ghosal,
\textit{On generalizations of certain summability methods using ideals},
Appl. Math. Letters, 24, (2011), 1509-1514.

\bibitem{PM4}
P. Das and P. Malik, \textit{On the statistical and $\mathcal{I}$-variation of double sequences}, Tatra Mt. Math. Publ., 40(2008), 91-112.

\bibitem{PM5}
P. Das, P. Kostyrko, W. Wilczynski and P. Malik, \textit{$\mathcal{I}$ and $\mathcal{I^*}$-convergence of double sequences}, Math. Slovaca, 58(2008), 605-620.

\bibitem{DR}
S. Debnath and D. Rakshit, 
\textit{Rough convergence in metric spaces}, 
 In: Dang, P., Ku, M., Qian, T., Rodino, L. (eds) New Trends in Analysis and Interdisciplinary Applications. Trends in Mathematics(). Birkhäuser, Cham., $https://doi.org/10.1007/978-3-319-48812-7_57$. 

\bibitem{FRIDY}
J. A. Fridy, \textit{On statistical convergence}, Analysis 5(1985), 301-313.

\bibitem{HF}
H. Fast, 
\textit{Sur la convergence ststistique},
Colloq. Math., 
2 (1951), 241-244.

\bibitem{DUN}
E. Gülle, E. Dündar, and U. Ulusu, 
\textit{Ideal convergence in partial metric spaces},
Soft Comput., 
27, 13789–13795 (2023), https://doi.org/10.1007/s00500-023-08994-0.

\bibitem{NH}
 N. Hossain and A. K. Banerjee,
 \textit{Rough I-convergence in intuitionistie fuzzy normed space},
 Bull. Math. Anal. Appl.,
 14(4), (2022), 1-10.

 \bibitem{PK1}
P. Kostyrko, M. Macaj, and T. Salat, Statistical convergence and I-convergence, 1999,
Unpublished; http://thales.doa.fmph.uniba.ak/macaj/ICON.pdf.

\bibitem{PK2}
P. Kostyrko, M. M´aˇcaz, T. Sal´at, and M. Sleziak, \textit{I-Convergence and Extremal I-limit points},
Math. Slovaca, 55 (4),(2005), 443–454. 

\bibitem{PK3}
P. Kostyrko ,T. Salat, and W. Wilczynski, \textit{I-Convergence},
Real. Anal. Exchange, 26 (2), (2000/2001), 669–685. 

\bibitem{SUK4}
S. Khatun and A. K. Banerjee, 
\textit{Rough statistical convergence of sequences in a partial metric space,}
arXiv: 2402.14452, (2024) (to be appear).

\bibitem{SUK6}
S. Khatun, A. K. Banerjee and R. Mondal, 
\textit{Rough ideal convergence in a partial metric space},
arXiv:2501.08060, (2025) (communicated).

\bibitem{BK}
B. K. Lahiri and P. Das, \textit{$I$ and $I^*$ convergence in topological spaces}, 
Math. Bohem., 130 (2), (2005), 153–160. 

\bibitem{PMROUGH1}
P. Malik, and M. Maity, 
\textit{On rough convergence of double sequence in normed linear spaces},
Bull. Allahabad Math. Soc., 
28 (1), (2013), 89-99.

\bibitem{PMROUGH2}
P. Malik, and M. Maity, 
\textit{On rough statistical convergence of double sequences in normed linear spaces}, 
Afr. Mat., 
27 (2016), 141-148.

\bibitem{PMROUGH3}
P. Malik, M. Maity and A. Ghosh, \textit{Rough I-Statistical convergence of sequences}, Southeast Asian Bulletin of Mathematics (2020) 44: 357-368.

\bibitem{MATW1}
S. Matthews, 
\textit{Partial metric topology}, In: Proceedings of the 8th Summer Conference on General Topology and Applications. Annals of the New York Academy of Sciences, 728 (1994), 183-197.

\bibitem{SUK1}
R. Mondal and S. Khatun, 
\textit{Rough convergence of sequences in an $S$ metric space},
PJM, 
13 (1), (2024), 316-322.

\bibitem{FN}
F. Nuray, 
\textit{Statistical convergence in partial metric spaces}, 
Korean J. Math., 
30 (1), (2022), 155–160.

\bibitem{PAL}
S. K. Pal, D. Chandra and S. Datta,
\textit{Rough ideal convergence}, 
Hacet. J. Math. Stat., 42 (6), (2013), 633-640.

\bibitem{PHU}
H. X. Phu, 
\textit{Rough convergence in normed linear spaces,} 
Numer. Funct. Anal. Optim.,
22 (1-2), (2001), 199-222.

\bibitem{PHU1}
H. X. Phu, 
\textit{Rough convergence in infinite dimensional normed spaces,} 
Numer. Funct. Anal. Optim., 24 (2-3), (2003), 285-301.

\bibitem{HS}
H. Steinhaus , \textit{Sur la convergence ordinaire et la convergence asymptotique} ,Colloq. Math., 2 (1951) 73-74.

\bibitem{TS}
T. Salat, \textit{On statistically convergent sequence of real numbers}, Mathematica Slovaca, 30 (2), (1980), 139-150.

\bibitem{YG}
U. Yamanc and M. Gurdal, \textit{I-Statistical convergence in 2-normed space}, Arab J.Math.Sci.,
20(1) (2014), 41-47.



\end{thebibliography}
\end{document}